\newtheorem{theorem}{Theorem}
\newtheorem{lemma}[theorem]{Lemma}
\newcommand{\F}{\mathbb{F}}
\DeclareMathOperator{\Tr}{Tr}
\DeclareMathOperator{\tr}{tr}
\DeclareMathOperator{\GL}{GL}
\DeclareMathOperator{\sign}{sign}
\DeclareMathOperator{\rank}{rank}
\DeclareMathOperator{\rad}{rad}
\newcommand{\abs}[1]{\lvert#1\rvert}
\begin{document}

\title{Hermitian rank distance codes}

\author{Kai-Uwe Schmidt}
\address{Department of Mathematics, Paderborn University, Warburger Str.\ 100, 33098 Paderborn, Germany}
\email{kus@math.upb.de}

\date{07 March 2017 (revised 17 August 2017)}

% \keywords{Association scheme, Code, Hermitian matrices, Rank}
\subjclass[2010]{05E15, 05E30}

\begin{abstract}
Let $X=X(n,q)$ be the set of $n\times n$ Hermitian matrices over~$\F_{q^2}$. It is well known that $X$ gives rise to a metric translation association scheme whose classes are induced by the rank metric. We study $d$-codes in this scheme, namely subsets $Y$ of $X$ with the property that, for all distinct $A,B\in Y$, the rank of $A-B$ is at least $d$. We prove bounds on the size of a $d$-code and show that, under certain conditions, the inner distribution of a $d$-code is determined by its parameters. Except if $n$ and $d$ are both even and $4\le d\le n-2$, constructions of $d$-codes are given, which are optimal among the $d$-codes that are subgroups of $(X,+)$. This work complements results previously obtained for several other types of matrices over finite fields.
\end{abstract}

\maketitle

\section{Introduction}

Let $X$ be a set of matrices over a finite field with the same number of rows and columns. Given an integer~$d$, we consider subsets $Y$ of $X$ with the property that, for all distinct $A,B\in Y$, the rank of $A-B$ is at least~$d$. We call such a set a \emph{$d$-code} in $X$. For fixed~$d$, one is usually interested in $d$-codes containing as many elements as possible. Instances of this problem have been considered when $X$ is the set of unrestricted matrices~\cite{Del1978}, alternating matrices~\cite{DelGoe1975}, and symmetric matrices~\cite{Sch2010},~\cite{Sch2015}. In all these cases, association schemes have been used critically to establish combinatorial properties of $d$-codes. In particular, bounds on the size of $d$-codes were obtained, which are often attained by constructions. Such results have found several applications in other branches of coding theory.
\par
In this paper, we consider the case that $X=X(n,q)$ is the set of $n\times n$ Hermitian matrices over the finite field $\F_{q^2}$ with $q^2$ elements. Here, $q$ is a prime power and~$\F_{q^2}$ is equipped with the involution $x\mapsto x^q$. We use the association scheme of Hermitian matrices to prove that every $d$-code $Y$ that is an additive subgroup of $(X,+)$ satisfies
\begin{equation}
\abs{Y}\le q^{n(n-d+1)}.   \label{eqn:bound}
\end{equation}
In the case that $d$ is odd, we prove that the bound~\eqref{eqn:bound} also holds for $d$-codes that are not necessarily subgroups of $(X,+)$ and that, in case of equality in~\eqref{eqn:bound}, the inner distribution of $Y$ is uniquely determined. In the case that~$d$ is even, we show by example that the bound~\eqref{eqn:bound} can be surpassed by $d$-codes not having the subgroup property and prove a larger bound that also holds for $d$-codes that are not necessarily subgroups of $(X,+)$. We also provide constructions of $d$-codes that are subgroups of $(X,+)$ and satisfy the bound~\eqref{eqn:bound} with equality for all possible $n$ and $d$, except if $n$ and $d$ are both even and $3<d<n$.
\par
It should be noted that related, but different, rank properties of sets of Hermitian matrices have been studied in~\cite{DumGowShe2011} and~\cite{GowLavSheVan2014}.

%%%%%%%%%%%%%%%%%%%%%%%%%%%%%%%%%%%%%%%%%%%%%%%%%%%%%%%%%%%%%%%%%%%%%%%%%%%%%

\section{The association scheme of Hermitian matrices}

A (symmetric) \emph{association scheme} with $n$ classes is a finite set $X$ together with $n+1$ nonempty relations $R_0,R_1,\dots,R_n$ that partition $X\times X$ and satisfy:
\begin{enumerate}
\setlength{\itemsep}{.5ex}
\item[(A1)] $R_0$ is the identity relation;
\item[(A2)] each of the relations is symmetric;
\item[(A3)] if $(x,y)\in R_k$, then the number of $z\in X$ such that $(x,z)\in R_i$ and $(z,y)\in R_j$ is a constant $p^k_{ij}$ depending only on $i$, $j$, and $k$, but not on the particular choice of $x$ and~$y$.
\end{enumerate}
For background on association schemes and connections to coding theory we refer to~\cite{Del1973},~\cite{DelLev1998}, and~\cite{MarTan2009} and to~\cite[Chapter 21]{MacSlo1977} and~\cite[Chapter 30]{vLiWil2001} for gentle introductions.
\par
Let $q$ be a prime power and let $\overline{x}=x^q$ be the conjugate of $x\in\F_{q^2}$. For a matrix $A$ over $\F_{q^2}$, write $A^*$ for the matrix obtained from $A$ by conjugation of each entry and transposition. An $n\times n$ matrix $A$ with entries in $\F_{q^2}$ is \emph{Hermitian} if $A^*=A$. Let $X=X(n,q)$ denote the set of $n\times n$ Hermitian matrices over $\F_{q^2}$. Then $X$ is an $n^2$-dimensional vector space over $\F_q$.
\par
It is well known~\cite[Section~9.5]{BroCohNeu1989} that $X$ gives rise to an association scheme with $n$ classes whose relations are given by
\[
(A,B)\in R_i \Leftrightarrow \rank(A-B)=i.
\]
Alternatively these relations arise as orbits of a group action. Let $G=\GL_n(\F_{q^2})\rtimes X$ be the semidirect product of the general linear group $\GL_n(\F_{q^2})$ and $X$, so that $G$ acts transitively on $X$ as follows
\[
\begin{split}
G\times X&\to X\\
((T,D),A)&\mapsto TAT^*+D.
\end{split}
\]
The action of $G$ extends to $X\times X$ componentwise and so partitions $X\times X$ into orbits, which are the relations defined above (see~\cite[Chapter~6]{Wan1996}, for example).
\par
The relations just defined are invariant under the translation $(A,B)\mapsto(A+C,B+C)$, which is the defining property of a \emph{translation scheme}. We shall make heavy use of the eigenvalues of this translation scheme, which are determined by the characters of $(X,+)$~\cite[Section~V]{DelLev1998}. Let $\chi:\F_q\to\mathbb{C}$ be a nontrivial character of $(\F_q,+)$ and, for $A,B\in X$, write
\[
\langle A,B\rangle=\chi(\tr(A^*B)),
\]
where $\tr$ is the matrix trace. For all $A,A',B\in X$, we have
\begin{equation}
\langle A+A',B\rangle=\langle A,B\rangle\langle A',B\rangle.   \label{eqn:ip_homomorphism}
\end{equation}
Indeed, it is readily verified that the mapping $A\mapsto\langle A,B\rangle$ ranges through all characters of $(X,+)$ as $B$ ranges over $X$. Let $X_i$ be the subset of $X$ containing all matrices of rank $i$. For $i,k\in\{0,1,\dots,n\}$, the numbers
\begin{equation}
Q_k(i)=\sum_{A\in X_k}\langle A,B\rangle\quad\text{for $B\in X_i$}   \label{eqn:eigenvalues}
\end{equation}
are independent of the choice of $B$ and are the \emph{eigenvalues} of the association scheme defined above (see~\cite[Section~V]{DelLev1998} for details). For odd $q$, these numbers have been determined by Carlitz and Hodges~\cite{CarHod1955} and also by Stanton~\cite{Sta1981}. We shall require the eigenvalues in the following form
\begin{equation}
Q_k(i)=(-1)^k\sum_{j=0}^k{n-j\brack n-k}{n-i\brack j}(-q)^{{k-j\choose 2}+nj},   \label{eqn:explicit_ev}
\end{equation}
where, for integral $m$ and $\ell$ with $\ell\ge 0$,
\[
{m\brack \ell}=\prod_{i=1}^\ell((-q)^{m-i+1}-1)/((-q)^i-1).
\]
is the \emph{negative $q$-binomial coefficient}. A simple proof of the formula~\eqref{eqn:explicit_ev} for odd and even $q$ is given in the appendix.
\par
Equivalently, the eigenvalues are given by the $n+1$ equations
\begin{equation}
\sum_{k=0}^j{n-k\brack n-j}Q_k(i)=(-1)^{(n+1)j}q^{nj}{n-i\brack j}   \label{eqn:Q_inversion}
\end{equation}
for $j\in\{0,1,\dots,n\}$, which can be proved using the inversion formula
\begin{equation}
\sum_{j=i}^k(-1)^{j-i}(-q)^{j-i\choose 2}{j\brack i}{k\brack j}=\delta_{k,i}   \label{eqn:inversion}
\end{equation}
(see~\cite[(10)]{DelGoe1975}, for example), where $\delta_{k,i}$ is the Kronecker $\delta$-function.

%%%%%%%%%%%%%%%%%%%%%%%%%%%%%%%%%%%%%%%%%%%%%%%%%%%%%%

\section{Combinatorial properties of subsets of $X(n,q)$}

Let $Y$ be a nonempty subset of $X=X(n,q)$. The \emph{inner distribution} of~$Y$ is the tuple $(A_0,A_1,\dots,A_n)$ of rational numbers, which are given by
\[
A_i=\frac{\abs{(Y\times Y)\cap R_i}}{\abs{Y}}.
\]
In other words, $A_i$ is the average number of pairs in $Y\times Y$ whose difference has rank $i$. Note that we always have $A_0=1$. The \emph{dual inner distribution} of $Y$ is the tuple $(A'_0,A'_1,\dots,A'_n)$, whose entries are given by
\begin{equation}
A'_k=\sum_{i=0}^nQ_k(i)A_i.   \label{eqn:def_dual_dist}
\end{equation}
Then $A'_0=\abs{Y}$ and, as a consequence of a general property of association schemes (see~\cite[Theorem~3]{DelLev1998}, for example), we have
\begin{equation}
A'_k\ge 0\quad\text{for each $k\in\{0,1,\dots,n\}$}.   \label{eqn:Ak_nonnegative}
\end{equation}
Given an integer $d$ satisfying $1\le d\le n$, we say that $Y$ is a \emph{$d$-code} if $A_1=\dots=A_{d-1}=0$. Equivalently, $Y$ is a $d$-code if $\rank(A-B)\ge d$ for all distinct $A,B\in Y$. We say that $Y$ is a \emph{$t$-design} if $A'_1=\dots=A'_t=0$.
\par
Now suppose that $Y$ is a subgroup of $(X,+)$. In this case, we say that $Y$ is \emph{additive}. It is readily verified that, if $Y$ has inner distribution $(A_0,A_1,\dots,A_n)$, then $A_i$ counts the number of matrices in $Y$ of rank $i$. We can associate with $Y$ its \emph{dual}
\[
Y^\perp=\{B\in X:\langle A,B\rangle=1\;\text{for each $A\in Y$}\},
\]
which is also additive and satisfies
\[
\abs{Y}\,\abs{Y^\perp}=\abs{X}.
\]
It follows from a well known property of association schemes (see~\cite[Theorem~27]{DelLev1998}, for example) that, if $Y$ has dual inner distribution $(A'_0,A'_1,\dots,A'_n)$, then the tuple
\[
\frac{1}{\abs{Y}}(A'_0,A'_1,\dots,A'_n)
\]
is the inner distribution of $Y^\perp$. This implies in particular that the entries in the dual inner distribution of an additive set $Y$ are divisible by $\abs{Y}$.
\par
We use this fact and the property~\eqref{eqn:Ak_nonnegative} to prove bounds on the size of $d$-codes.
\begin{theorem}
\label{thm:bound}
Every additive $d$-code $Y$ in $X(n,q)$ satisfies
\[
\abs{Y}\le q^{n(n-d+1)}.
\]
Moreover, if $d$ is odd, then this bound also holds for arbitrary $d$-codes $Y$ in $X(n,q)$ and equality holds if and only if $Y$ is an $(n-d+1)$-design.
\end{theorem}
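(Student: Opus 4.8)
The plan is to run the standard linear-programming argument for association schemes, exploiting the two facts just recalled: the nonnegativity $A'_k\ge 0$ and, in the additive case, the divisibility of each $A'_k$ by $\abs{Y}$. Let $Y$ be a $d$-code with inner distribution $(A_0,\dots,A_n)$, so $A_0=1$ and $A_1=\dots=A_{d-1}=0$. The key is to find a suitable linear combination of the equations $A'_k=\sum_i Q_k(i)A_i$ that isolates $\abs{Y}=A'_0$ on one side and a manifestly controllable quantity on the other. Concretely, I would use the inverted form~\eqref{eqn:Q_inversion}: summing $\beta_j$ times the $j$-th equation over $j=0,1,\dots,n-d$ produces, on the left, $\sum_k\big(\sum_j \beta_j{n-k\brack n-j}\big)Q_k(i)$, and on the right a polynomial in ${n-i\brack j}$ for $j\le n-d$. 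The idea is to choose the $\beta_j$ (via the inversion formula~\eqref{eqn:inversion}) so that the left-hand coefficient of $Q_k(i)$ vanishes for the "bad" indices and equals $1$ for $k=0$; then pairing with $A_i$ and summing over $i$ yields
\[
\abs{Y}+\text{(sum of a subset of the $A'_k$, each $\ge 0$)}=\abs{Y}\sum_{j=0}^{n-d}\beta_j\,[\text{term with }i{=}0\text{ only}]+\text{(nonneg.\ terms)},
\]
and since $A_i=0$ for $1\le i\le d-1$ while ${n-i\brack j}=0$ once $j>n-i$, only $i=0$ contributes on the right up to order $n-d$. Tracking the constants, the right side collapses to $q^{n(n-d+1)}$ (using $A_0=1$ and the evaluation of ${n-0\brack n-d}$-type coefficients), giving $\abs{Y}\le q^{n(n-d+1)}$ with equality exactly when all the intervening $A'_k$ in the "bad" range vanish.

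For the additive case the inequality is immediate once one observes that in an additive $d$-code the $A'_k$ are honest nonnegative integers divisible by $\abs{Y}$, so any inequality of the shape $\abs{Y}\le q^{n(n-d+1)}$ forces nothing extra; but the cleanest route is actually the dual one: apply the bound to $Y^\perp$. Since $Y^\perp$ has inner distribution $\tfrac{1}{\abs{Y}}(A'_0,\dots,A'_n)$, the design condition "$A'_1=\dots=A'_t=0$" translates into $Y^\perp$ being a $(t{+}1)$-code, and the Singleton-type bound for codes dualizes to the design bound $\abs{Y}\le q^{n(n-d+1)}$ whenever $Y$ is a $d$-code. Concretely I would prove the two complementary inequalities: (i) any $d$-code satisfies $\abs{Y}\le q^{n(n-d+1)}$ when $d$ is odd, and (ii) any $(n-d+1)$-design satisfies $\abs{Y}\ge q^{n(n-d+1)}$, via the same LP machinery applied with the roles of $A_i$ and $A'_k$ exchanged (using the dual eigenvalue identities, which hold because the scheme is self-dual up to the explicit formulas above). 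Equality in (i) then forces $Y$ to be a design, and conversely.

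The parity hypothesis enters through the signs $(-q)$ versus $q$ in~\eqref{eqn:explicit_ev} and~\eqref{eqn:Q_inversion}: the factor $(-1)^{(n+1)j}q^{nj}$ on the right of~\eqref{eqn:Q_inversion} and the alternating signs inside ${n-i\brack j}$ mean that the "extra" terms $\sum_{k}(\cdots)Q_k(i)$ one discards are genuinely nonnegative only when $d$ is odd; when $d$ is even some of these coefficients change sign and the argument produces $q^{n(n-d+1)}$ only as a bound on a signed combination, not on $\abs{Y}$ itself — which is exactly why the theorem restricts the non-additive statement to odd $d$ (and why, as the introduction notes, even $d$ genuinely behaves differently). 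So the main obstacle, and the part requiring care, is the bookkeeping of these signs: verifying that for odd $d$ the chosen multipliers $\beta_j$ make every coefficient of every $Q_k(i)$ that is not manifestly handled nonnegative, so that dropping those terms is legitimate. Once the sign pattern is pinned down, the evaluation of the leading constant $q^{n(n-d+1)}$ and the equality analysis are routine manipulations of the negative $q$-binomial coefficients via~\eqref{eqn:inversion}.
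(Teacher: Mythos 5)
Your overall strategy --- combining the identities \eqref{eqn:Q_inversion} with the nonnegativity \eqref{eqn:Ak_nonnegative} and, for additive codes, divisibility of the $A'_k$ by $\abs{Y}$ --- is the same as the paper's, but the execution has concrete problems. First, the linear combination you propose, over $j=0,1,\dots,n-d$, does not make the right-hand side collapse to the $i=0$ term: ${n-i\brack j}$ vanishes only when $j>n-i$, so for $j\le n-d$ the terms with $d\le i\le n-j$ survive and are not controlled by the code condition. No linear combination is needed: the single choice $j=n-d+1$ already kills every $i\ge d$ via ${n-i\brack n-d+1}=0$, while $1\le i\le d-1$ is killed by $A_i=0$, yielding the one identity $\sum_{k=0}^{n-d+1}{n-k\brack d-1}A'_k=(-1)^{(n+1)(n-d+1)}q^{n(n-d+1)}{n\brack d-1}$. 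Second, the sign verification that you yourself flag as the crux is left undone; it is a one-line check once you record that the sign of ${m\brack \ell}$ is $(-1)^{\ell(m-\ell)}$: for odd $d$ the exponent $\ell=d-1$ is even, so every coefficient ${n-k\brack d-1}$ is nonnegative, dropping the $k\ge1$ terms is legitimate, and equality forces $A'_1=\dots=A'_{n-d+1}=0$, which is exactly the $(n-d+1)$-design condition (so the two-sided code/design duality you invoke is not needed).

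The most serious gap is the additive case for even $d$, which is the only part of the theorem not subsumed by the odd-$d$ statement, and which genuinely cannot follow from an LP inequality valid for all codes, since Theorem~\ref{thm:con_non_additive} exhibits non-additive codes exceeding the bound. Your paragraph on this point is not an argument: the duality route via $Y^\perp$ fails because being an additive $d$-code gives no a priori lower bound on the minimum rank distance of $Y^\perp$, and no design property of $Y$. What is needed is the divisibility argument you mention in your opening sentence but never deploy: in the displayed identity above the left-hand side is an integer linear combination of the $A'_k$, each divisible by $\abs{Y}$, so $\abs{Y}$ divides $q^{n(n-d+1)}{n\brack d-1}$; since $\abs{Y}$ is a power of the characteristic $p$ and ${n\brack d-1}$ is coprime to $p$, this forces $\abs{Y}$ to divide $q^{n(n-d+1)}$. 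Without this step the additive bound for even $d$ remains unproved.
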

\begin{proof}
Let $(A_0,\dots,A_n)$ and $(A'_0,\dots,A'_n)$ be the inner distribution and the dual inner distribution of $Y$, respectively. Use~\eqref{eqn:def_dual_dist} and~\eqref{eqn:Q_inversion} to obtain, for each $j\in\{0,1,\dots,n\}$,
\begin{align*}
\sum_{k=0}^j{n-k\brack n-j}A'_k&=\sum_{i=0}^nA_i\sum_{k=0}^j{n-k\brack n-j}Q_k(i)\\
&=(-1)^{(n+1)j}q^{nj}\sum_{i=0}^nA_i{n-i\brack j}.
\end{align*}
Set $j=n-d+1$ and use $A_0=1$ and $A_1=\dots=A_{d-1}=0$ and the fact that ${m\brack \ell}=0$ for $m<\ell$ to find that
\begin{equation}
\sum_{k=0}^{n-d+1}{n-k\brack d-1}A'_k=(-1)^{(n+1)(n-d+1)}q^{n(n-d+1)}{n\brack d-1}.   \label{eqn:sum_A_k}
\end{equation}
If $Y$ is additive, then the left-hand side is divisible by $\abs{Y}$, hence the right-hand side is divisible by $Y$. Let $p$ be the prime dividing $q$. If $Y$ is additive, then $\abs{Y}$ is a power of $p$. It is readily verified that ${n\brack d-1}$ is not divisible by~$p$, which implies that~$\abs{Y}$ divides $q^{n(n-d+1)}$, proving the bound for additive codes.
\par
Now let $d$ be odd. Note that the sign of ${m\brack \ell}$ equals $(-1)^{\ell(m-\ell)}$. Hence, since $d$ is odd, the binomial coefficients in the sum on the left-hand side of~\eqref{eqn:sum_A_k} are nonnegative. Since the numbers $A'_k$ are also nonnegative by~\eqref{eqn:Ak_nonnegative} and $A'_0=\abs{Y}$, we find from~\eqref{eqn:sum_A_k} that
\[
{n\brack d-1}\abs{Y}\le(-1)^{(n+1)(n-d+1)}q^{n(n-d+1)}{n\brack d-1},
\]
which gives the bound for general $d$-codes in the case that $d$ is odd. Finally, equality occurs if and only if $A'_1=\dots=A'_{n-d+1}=0$ in~\eqref{eqn:sum_A_k}, which is equivalent to $Y$ being an $(n-d+1)$-design.
\end{proof}
\par
For even $d$, the bound given in Theorem~\ref{thm:bound} cannot hold in general for arbitrary $d$-codes in $X(n,q)$. For example, Theorem~\ref{thm:bound} asserts that the largest additive $n$-code in $X(n,q)$ has size $q^n$, whereas there exist $n$-codes in $X(n,q)$ of size $q^n+1$. This will be shown in Theorem~\ref{thm:con_non_additive}.
\par
The best bound we could prove for $d$-codes when $d$ is even is contained in the following theorem.
\begin{theorem}
\label{thm:lp_deven}
For even $d$, every $d$-code $Y$ in $X(n,q)$ satisfies
\[
\abs{Y}\le (-1)^{n+1}q^{n(n-d+1)}\frac{((-q)^{n-d+2}-1)+(-q)^n((-q)^{n-d+1}-1)}{(-q)^{n-d+2}-(-q)^{n-d+1}}.
\]
\end{theorem}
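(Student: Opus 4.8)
The plan is to prove the bound by a Delsarte-type linear programming argument, continuing the analysis from the proof of Theorem~\ref{thm:bound} but now combining two instances of~\eqref{eqn:Q_inversion} rather than using a single one. Let $(A_0,\dots,A_n)$ and $(A'_0,\dots,A'_n)$ be the inner and dual inner distribution of $Y$. Specialising, exactly as in that proof, the relation
\[
\sum_{k=0}^j{n-k\brack n-j}A'_k=(-1)^{(n+1)j}q^{nj}\sum_{i=0}^nA_i{n-i\brack j}
\]
to $j=n-d+1$ and to $j=n-d+2$, and using $A_0=1$, $A_1=\dots=A_{d-1}=0$ and the vanishing of ${n-i\brack n-d+1}$ and ${n-i\brack n-d+2}$ for $d\le i\le n$, only the term $i=0$ survives on each right-hand side. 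This recovers~\eqref{eqn:sum_A_k} and adds the companion identity
\[
\sum_{k=0}^{n-d+2}{n-k\brack d-2}A'_k=(-1)^{(n+1)(n-d+2)}q^{n(n-d+2)}{n\brack d-2}.
\]

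Next I would rescale the two identities so that their right-hand sides become ${n\brack d-1}$ and ${n\brack d-2}$ (that is, multiply~\eqref{eqn:sum_A_k} by $(-1)^{(n+1)(n-d+1)}q^{-n(n-d+1)}$ and the companion identity by $(-1)^{(n+1)(n-d+2)}q^{-n(n-d+2)}$), and form $\lambda$ times the first rescaled identity plus the second, where
\[
\lambda=(-1)^{n}q^{-n}\,\frac{(-q)^{d-1}-1}{(-q)^{n-d+1}-1}>0
\]
is the unique value making the coefficient of $A'_1$ in the combined identity vanish. Because $d$ is even, $d-2$ is even and every coefficient ${n-k\brack d-2}$ is nonnegative, while $d-1$ is odd and the signs of the ${n-k\brack d-1}$ alternate with $k$; using these facts and the product formula for negative $q$-binomial coefficients one checks that, for this $\lambda$, all the coefficients of $A'_k$ with $k\ge1$ in the combined identity are nonnegative and that the coefficient of $A'_0$ is strictly positive. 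Dropping the terms with $k\ge1$ by~\eqref{eqn:Ak_nonnegative} and using $A'_0=\abs{Y}$ then shows that the coefficient of $A'_0$ times $\abs{Y}$ is at most $\lambda{n\brack d-1}+{n\brack d-2}$; dividing through and simplifying yields the stated bound.

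I expect the obstacles to be computational rather than conceptual. The first is to confirm that $k=1$ is genuinely the binding index for nonnegativity, which reduces to the monotonicity in $k$ of the ratios ${n-k\brack d-2}/\abs{{n-k\brack d-1}}$ over $1\le k\le n-d+1$. The second is the pervasive dependence on the parity of $n$: the signs of $(-1)^{(n+1)j}$, of ${n\brack d-1}$, and of the individual ${n-k\brack d-1}$ all change with it, so the sign analysis must be run (or at least checked) separately for $n$ even and $n$ odd, even though $\lambda$ and the final bound have uniform expressions. The third, and longest, is the closing simplification of $\bigl(\lambda{n\brack d-1}+{n\brack d-2}\bigr)$ divided by the coefficient of $A'_0$ into the closed form in the statement; it goes through cleanly using ${m\brack 1}=((-q)^m-1)/((-q)-1)$, and the special case $d=n$ (which forces $n$ even) returns $\abs{Y}\le q^{2n-1}-q^{n}+q^{n-1}$, in agreement with the displayed expression.
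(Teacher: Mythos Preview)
Your proposal is correct and follows essentially the same route as the paper: combine the two instances of~\eqref{eqn:Q_inversion} at $j=n-d+1$ and $j=n-d+2$ so that the coefficient of $A'_1$ cancels, verify that the remaining coefficients of $A'_k$ for $k\ge 1$ are nonnegative, and then drop them. The paper implements this by multiplying the two identities by ${n-1\brack d-2}$ and ${n-1\brack d-1}$ respectively (which is your combination up to a positive scalar, since $u_1=v_1$), and its sign check is slightly slicker than the parity split you anticipate: from $u_k/v_k=((-q)^{n-k-d+2}-1)/((-q)^{n-d+1}-1)$ one gets $\lvert u_k\rvert\le\lvert v_k\rvert$ for all $k\ge 1$ in one stroke, with no case distinction on $n$.
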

\begin{proof}
Let $(A_0,\dots,A_n)$ and $(A'_0,\dots,A'_n)$ be the inner distribution and the dual inner distribution of $Y$, respectively. As in the proof of Theorem~\ref{thm:bound}, we have for each $j\in\{0,1,\dots,n\}$,
\[
\sum_{k=0}^j{n-k\brack n-j}A'_k=(-1)^{(n+1)j}q^{nj}\sum_{i=0}^nA_i{n-i\brack j}.
\]
Apply this identity with $j=n-d+1$ and $j=n-d+2$ to obtain, as in the proof of Theorem~\ref{thm:bound},
\begin{align*}
\sum_{k=0}^{n-d+1}{n-k\brack d-1}A'_k&=(-1)^{(n+1)(n-d+1)}q^{n(n-d+1)}{n\brack d-1}
\intertext{and}
\sum_{k=0}^{n-d+2}{n-k\brack d-2}A'_k&=(-1)^{(n+1)(n-d+2)}q^{n(n-d+2)}{n\brack d-2}.
\end{align*}
Notice that  we can extend the summation range in the first identity up to $n-d+2$ without changing the value of the sum. Therefore, writing
\[
u_k={n-k\brack d-1}{n-1\brack d-2}\quad\text{and}\quad v_k={n-k\brack d-2}{n-1\brack d-1}
\]
and using that $d$ is even, we find that
\begin{multline}
(-1)^{n+1}\sum_{k=0}^{n-d+2}(u_k-v_k)A'_k\\
=q^{n(n-d+1)}\Bigg({n\brack d-1}{n-1\brack d-2}+(-q)^n{n\brack d-2}{n-1\brack d-1}\Bigg).   \label{eqn:sum_Ak}
\end{multline}
Next we show that the summands on the left-hand side are nonnegative. Since the sign of ${m\brack \ell}$ is $(-1)^{\ell(m-\ell)}$, we find that $\sign(u_k)=(-1)^{n-k+1}$ and $\sign(v_k)=(-1)^n$. Therefore the left-hand side of~\eqref{eqn:sum_Ak} equals
\[
\sum_{k=0}^{n-d+2}\big((-1)^k\abs{u_k}+\abs{v_k}\big)A'_k.
\]
We have
\[
\frac{u_k}{v_k}=\frac{(-q)^{n-k-d+2}-1}{(-q)^{n-d+1}-1},
\]
from which we find that $\abs{u_k}\le \abs{v_k}$ for each $k\ge 1$. Hence the left-hand side of~\eqref{eqn:sum_Ak} can be bounded from below by
\[
(-1)^{n+1}\bigg({n\brack d-1}{n-1\brack d-2}-{n\brack d-2}{n-1\brack d-1}\bigg)A'_0.
\]
Since this expression is positive and $A'_0=\abs{Y}$, we obtain
\[
\abs{Y}\le (-1)^{n+1}q^{n(n-d+1)}\frac{\displaystyle{n\brack d-1}{n-1\brack d-2}+(-q)^n{n\brack d-2}{n-1\brack d-1}}{\displaystyle{n\brack d-1}{n-1\brack d-2}-{n\brack d-2}{n-1\brack d-1}},
\]
from which the desired bound can be obtained after elementary manipulations.
\end{proof}
\par
For example, for $d=n=2$, the bound of Theorem~\ref{thm:lp_deven} is
\[
\abs{Y}\le q^3-q^2+q.
\]
It is known that this bound is not tight; the largest $2$-code in $X(2,q)$ has size $5$, $16$, $24$, $47$ for $q$ equal to $2$, $3$, $4$, $5$, respectively~\cite{Sch2016}. In these cases, the optimal codes have been classified in~\cite{Sch2016}. For $q=2$, the unique optimal construction arises as a special case of Theorem~\ref{thm:con_non_additive}.
\par
However, it is conjectured that Theorem~\ref{thm:lp_deven} gives the optimal solution to the linear program, whose objective is to maximise 
\[
\abs{Y}=\sum_{i=0}^nA_i,
\]
subject to the nonnegativity of the numbers $A_i$ and $A'_k$ attached to $Y$. This has been checked with a computer for many small values of $n$ and $q$.
\par
It is well known~\cite[Lemma~1]{GowLavSheVan2014} that there exists an $n$-code in $X(n,q)$ of size $N$ if and only if there exists a partial spread in the Hermitian polar space $H(2n-1,q^2)$ of size $N+1$. We can therefore obtain bounds for $n$-codes in $X(n,q)$ from bounds for partial spreads in $H(2n-1,q^2)$ and vice versa. For example, Theorem~\ref{thm:bound} implies that, for odd $n$, a partial spread in $H(2n-1,q^2)$ contains at most $q^n+1$ elements. This gives another proof of a theorem due to Vanhove~\cite{Van2009},~\cite{Van2011}. In the other direction, from a result due to De~Beule, Klein, Metsch, and Storme~\cite{DebKleMetSto2008} we obtain
\[
\abs{Y}\le\frac{q(q^2+1)}{2}
\]
for every $2$-code $Y$ in $X(2,q)$. This bound is tight for $q\in\{2,3\}$. From a result due to Ihringer~\cite{Ihr2014} we have
\[
\abs{Y}\le \frac{q^{2n}-1}{q+1}
\]
for every $n$-code in $X(n,q)$, which can be proved more directly using~\cite[Corollary~3.2]{Ihr2014} together with the explicit knowledge of the eigenvalues~\eqref{eqn:explicit_ev}, in particular~\eqref{eqn:Qk0} and~\eqref{eqn:num_her_mat} for $k=1$. This bound is slightly better than the corresponding bound $\abs{Y}\le q^{2n-1}-q^n+q^{n-1}$ of Theorem~\ref{thm:lp_deven}. Some improved bounds for $n$-codes in $X(n,q)$ in the case that $q$ is not a prime can be obtained from~\cite{IhrSinXia2018}.
\par
Our final result of this section gives the inner distribution of a $d$-code, provided that it is also an $(n-d)$-design.
\begin{theorem}
\label{thm:inner_dist}
If $Y$ is a $d$-code and an $(n-d)$-design in $X(n,q)$, then its inner distribution $(A_i)$ satisfies 
\[
A_{n-i}=\sum_{j=i}^{n-d}(-1)^{j-i}(-q)^{j-i\choose 2}{j\brack i}{n\brack j}\bigg(\frac{\abs{Y}}{q^{nj}}(-1)^{(n+1)j}-1\bigg)
\]
for each $i\in\{0,1,\dots,n-1\}$.
\end{theorem}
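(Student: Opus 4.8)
The plan is to feed the design hypothesis into the identity already obtained in the proof of Theorem~\ref{thm:bound}, turn the resulting equations into a triangular linear system for the numbers $A_{n-\ell}$, and then invert that system using~\eqref{eqn:inversion}. First I would recall from the proof of Theorem~\ref{thm:bound} that, for every $j\in\{0,1,\dots,n\}$,
\[
\sum_{k=0}^j{n-k\brack n-j}A'_k=(-1)^{(n+1)j}q^{nj}\sum_{i=0}^nA_i{n-i\brack j},
\]
and I would restrict attention to $j\in\{0,1,\dots,n-d\}$. Since $Y$ is an $(n-d)$-design, $A'_1=\dots=A'_{n-d}=0$, so for such $j$ the left-hand side reduces to ${n\brack n-j}A'_0={n\brack n-j}\abs{Y}$, which equals ${n\brack j}\abs{Y}$ by the symmetry ${n\brack j}={n\brack n-j}$ of the negative $q$-binomial coefficients (the same symmetry implicitly used in~\eqref{eqn:sum_A_k}). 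On the right-hand side, the $d$-code property $A_1=\dots=A_{d-1}=0$ together with $A_0=1$ and the vanishing of ${n-i\brack j}$ whenever $i>n-j$ leaves only the term $i=0$ and the terms $i\in\{d,\dots,n-j\}$. Re-indexing the latter by $\ell=n-i$, I would arrive at
\[
\sum_{\ell=j}^{n-d}{\ell\brack j}A_{n-\ell}={n\brack j}\bigg(\frac{\abs{Y}}{q^{nj}}(-1)^{(n+1)j}-1\bigg)\qquad\text{for each }j\in\{0,1,\dots,n-d\}.
\]

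Write $c_j$ for the right-hand side of this identity; as $j$ ranges over $\{0,1,\dots,n-d\}$ these equations form a triangular system in the unknowns $A_d,A_{d+1},\dots,A_n$. To solve it I would multiply the equation indexed by $j$ by $(-1)^{j-i}(-q)^{j-i\choose 2}{j\brack i}$, sum over $j$ from $i$ to $n-d$, and interchange the order of the two summations; the inner sum over $j$ then becomes $\sum_{j=i}^{\ell}(-1)^{j-i}(-q)^{j-i\choose 2}{j\brack i}{\ell\brack j}$, which equals $\delta_{\ell,i}$ by the inversion formula~\eqref{eqn:inversion}, so the double sum collapses to $A_{n-i}$. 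This yields
\[
A_{n-i}=\sum_{j=i}^{n-d}(-1)^{j-i}(-q)^{j-i\choose 2}{j\brack i}\,c_j\qquad\text{for }i\in\{0,1,\dots,n-d\},
\]
which is the asserted formula once $c_j$ is substituted. For $i\in\{n-d+1,\dots,n-1\}$ the sum on the right-hand side of the claimed identity is empty, while $A_{n-i}=0$ because $1\le n-i\le d-1$ and $Y$ is a $d$-code, so the formula holds in that range as well.

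I do not expect a serious obstacle: the $(n-d)$-design hypothesis is precisely what trivialises the left-hand side of the basic identity, and the $d$-code hypothesis together with the vanishing of ${n-i\brack j}$ for $i>n-j$ is precisely what reduces the right-hand side to a finite triangular system of the shape that~\eqref{eqn:inversion} is built to invert. The only points that call for attention are checking that these two hypotheses kill exactly the terms claimed, remembering to invoke ${n\brack j}={n\brack n-j}$ when simplifying the left-hand side, and keeping the summation ranges aligned when the order of summation is swapped.
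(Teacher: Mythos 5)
Your proposal is correct and follows essentially the same route as the paper: specialise the identity from the proof of Theorem~\ref{thm:bound} to $j\in\{0,1,\dots,n-d\}$, use the design hypothesis to collapse the left-hand side and the code hypothesis (plus the vanishing of ${n-i\brack j}$ for $i>n-j$) to collapse the right-hand side, and then invert the resulting triangular system via~\eqref{eqn:inversion}. Your write-up merely spells out the inversion step and the trivial range $i>n-d$ in more detail than the paper does.
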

\begin{proof}
Let $(A_0,\dots,A_n)$ and $(A'_0,\dots,A'_n)$ be the inner distribution and the dual inner distribution of $Y$, respectively. As in the proof of Theorem~\ref{thm:bound}, we have for each $j\in\{0,1,\dots,n\}$,
\[
\sum_{k=0}^j{n-k\brack n-j}A'_k=(-1)^{(n+1)j}q^{nj}\sum_{i=0}^nA_i{n-i\brack j}.
\]
Since $Y$ is a $d$-code and an $(n-d)$-design, we find that, for each $j\in\{0,1,\dots,n-d\}$,
\[
{n\brack j}\bigg(\frac{\abs{Y}}{q^{nj}}(-1)^{(n+1)j}-1\bigg)=\sum_{i=0}^{n-d}A_{n-i}{i\brack j}.
\]
The proof is completed by applying the inversion formula~\eqref{eqn:inversion}.
\end{proof}
\par
Call a $d$-code $Y$ in $X(n,q)$ \emph{maximal additive} if $Y$ is additive and
\[
\abs{Y}=q^{n(n-d+1)},
\]
so that $Y$ meets the bound of Theorem~\ref{thm:bound} with equality. If $d$ is odd, then $Y$ is an $(n-d+1)$-design by Theorem~\ref{thm:bound}, and so Theorem~\ref{thm:inner_dist} implies that the inner distribution of $Y$ is uniquely determined by its parameters. The situation is different for even $d$. It was checked with a computer that there are exactly four different inner distributions of maximal additive $2$-codes in $X(3,2)$ and at least three different inner distributions of maximal additive $2$-codes in $X(4,2)$. The four possibilities for the inner distribution $(A_0,A_1,A_2,A_3)$ of a maximal additive $2$-code in $X(3,2)$ are
\[
(1,0,21,42),\quad (1,0,29,34),\quad (1,0,37,26),\quad (1,0,45,18).
\]

%%%%%%%%%%%%%%%%%%%%%%%%%%%%%%%%%%%%%%%%%%%%%%%%%%%%%%%%

\section{Constructions}

Recall from the previous section that a maximal additive $d$-code in $X(n,q)$ is a $d$-code that meets the bound of Theorem~\ref{thm:bound} with equality. In this section, we provide constructions of maximal additive $d$-codes in $X(n,q)$ for all possible values of $d$, except when $n$ and $d$ are both even and $4\le d\le n-2$.
\par
We shall work with Hermitian forms rather than with matrices. Let $V=V(n,q^2)$ be an $n$-dimensional vector space over $\F_{q^2}$. Recall that a \emph{Hermitian form} on $V$ is a mapping
\[
H:V\times V\to \F_{q^2}
\]
that is $\F_{q^2}$-linear in the first coordinate and satisfies $H(y,x)=\overline{H(x,y)}$ for all $x,y\in V$. The \emph{(left) radical} of a Hermitian form $H$ on $V$ is the $\F_{q^2}$-vector space
\[
\rad(H)=\{x\in V:H(x,y)=0\;\text{for all $y\in V$}\}
\]
and its \emph{rank} is $n-\dim\rad(H)$. Fixing a basis $\xi_1,\dots,\xi_n$ for $V$ over $\F_{q^2}$, we can identify a Hermitian form $H$ on $V$ with the $n\times n$ Hermitian matrix 
\[
(H_{ij}=H(\xi_i,\xi_j))_{1\le i,j\le n}.
\]
It is readily verified that the rank of this matrix equals the rank of the Hermitian form $H$. In fact this gives a one-to-one correspondence between $X(n,q)$ and Hermitian forms on $V$.
\par
We shall identify the vector space $V(n,q^2)$ with $\F_{q^{2n}}$ and use the relative trace function $\Tr:\F_{q^{2n}}\to\F_{q^2}$, given by
\[
\Tr(x)=\sum_{k=0}^{n-1}x^{q^{2k}}.
\]
It is easy to check that this trace function is $\F_{q^2}$-linear and satisfies $\Tr(x)^q=\Tr(x^q)$ for all $x\in\F_{q^{2n}}$.
\par
The following theorem contains a construction for maximal additive $d$-codes in $X(n,q)$ when $n-d$ is odd.
\begin{theorem}
\label{thm:construction_1}
Let $n$ and $d$ be integers of opposite parity satisfying $1\le d\le n-1$. Then, as $a_1,\dots,a_{(n-d+1)/2}$ range over $\F_{q^{2n}}$, the mappings
\begin{gather*}
H:\F_{q^{2n}}\times \F_{q^{2n}}\to\F_{q^2}\\[1ex]
H(x,y)=\Tr\bigg(\sum_{j=1}^{(n-d+1)/2}(a_jxy^{q^{2j-1}}+(a_j)^qx^{q^{2j}}y^q)\bigg)
\end{gather*}
form an additive $d$-code in $X(n,q)$ of size $q^{n(n-d+1)}$.
\end{theorem}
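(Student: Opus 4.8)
The plan is to verify three things: (i) each mapping $H$ above is a Hermitian form on $\F_{q^{2n}}$, hence corresponds to an element of $X(n,q)$; (ii) the set $Y$ of all such forms is an additive subgroup of $(X,+)$; and (iii) every nonzero $H$ in $Y$ has rank at least $d$, i.e. $\dim_{\F_{q^2}}\rad(H)\le n-d$. Granting these, a counting argument will show $\abs{Y}=q^{n(n-d+1)}$, which matches the bound of Theorem~\ref{thm:bound}, so $Y$ is a maximal additive $d$-code.

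For (i), I would check directly that $H$ is $\F_{q^2}$-linear in $x$ (clear, since each term is additive in $x$ and $x\mapsto x^{q^{2j}}$ is $\F_{q^2}$-linear on $\F_{q^{2n}}$, as $q^{2j}$ is a power of $q^2$), and that $H(y,x)=\overline{H(x,y)}=H(x,y)^q$. The latter should follow by applying $\Tr(\cdot)^q=\Tr((\cdot)^q)$ and noting that conjugating the $j$th summand $a_jxy^{q^{2j-1}}$ and raising to the $q$th power gives $(a_j)^q x^q y^{q^{2j}}$; a change of variable in the exponent (using that $\Tr$ is invariant under $z\mapsto z^{q^2}$, so $\Tr(x^q y^{q^{2j}})=\Tr(x^{q^{1-2j}}y^{q^{0}})$ after dividing exponents by $q^{2j}$... more precisely $\Tr(x^{q^a}y^{q^b})=\Tr(x^{q^{a-b}}y)$ when $b$ is even, using $\Tr(z^{q^2})=\Tr(z)$) matches it up with the paired term in the original sum. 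For (ii), additivity in the parameters $a_1,\dots,a_{(n-d+1)/2}$ is immediate from the formula, so $Y$ is closed under addition; and two parameter tuples giving the same form must be checked to coincide, which reduces to the same rank/radical computation as in (iii) applied to the difference.

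The main obstacle, and the technical heart of the proof, is (iii): bounding the radical. Suppose $x\in\rad(H)$, so $H(x,y)=0$ for all $y\in\F_{q^{2n}}$. Writing this out, $\Tr\big(\sum_j(a_jxy^{q^{2j-1}}+(a_j)^qx^{q^{2j}}y^q)\big)=0$ for all $y$. Since $y\mapsto\Tr(cy)$ and its conjugate-power variants separate points, and since the map $y\mapsto y^{q}$ is a bijection, I would rewrite the condition as a single $\F_{q^2}$-linearized polynomial in $y$ vanishing identically: collect all terms into the form $\Tr\big(\sum_{\ell} c_\ell\, y^{q^{\ell}}\big)=0$ for all $y$, where the coefficients $c_\ell$ are $\F_{q^2}$-linear combinations of $x,x^{q^2},x^{q^4},\dots$ with coefficients built from the $a_j$. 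Because $\Tr$ is nondegenerate, this forces a linearized polynomial identity, equivalently a system saying that a certain $\F_{q^2}$-linearized polynomial $L(x)$ of $q^2$-degree at most $n-d$ (that is, involving only $x,x^{q^2},\dots,x^{q^{2(n-d)}}$) vanishes. A nonzero $\F_{q^2}$-linearized polynomial of $q^2$-degree $m$ has at most $q^{2m}$ roots in any field, so $\abs{\rad(H)}\le q^{2(n-d)}$, giving $\dim\rad(H)\le n-d$ as required — unless the linearized polynomial is identically zero, which is exactly the case corresponding to $H=0$, i.e. all $a_j=0$. I expect the bookkeeping of exponents — tracking which power of $q$ each term contributes after the conjugation-symmetry reductions, so that the resulting polynomial in $x$ genuinely has $q^2$-degree $\le n-d$ and not more — to be the fiddly part; the parity hypothesis that $n-d$ is odd is what makes $(n-d+1)/2$ an integer and makes the symmetric pairing of the $2j-1$ and $2j$ exponents close up correctly.

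Finally, for the size: the parameters $(a_1,\dots,a_{(n-d+1)/2})$ range over $\F_{q^{2n}}^{(n-d+1)/2}$, a set of cardinality $q^{2n\cdot(n-d+1)/2}=q^{n(n-d+1)}$, and by the injectivity established in (ii) distinct tuples give distinct forms, so $\abs{Y}=q^{n(n-d+1)}$. Combined with Theorem~\ref{thm:bound} this shows $Y$ is optimal among additive $d$-codes, completing the proof.
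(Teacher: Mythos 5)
Your overall strategy is the same as the paper's: express the radical of $H$ as the kernel of an $\F_{q^2}$-linearized polynomial via the nondegeneracy of $(u,v)\mapsto\Tr(uv)$, and bound the kernel by the degree of that polynomial. Parts (i), (ii) and the final counting are fine. Concretely, writing $H(x,y)=\Tr(y^q\,L(x))$ with
\[
L(x)=\sum_{j=1}^{(n-d+1)/2}\big((a_jx)^{q^{2n-2j+2}}+(a_j)^q\,x^{q^{2j}}\big)
\]
gives $\rad(H)=\ker L$, exactly as you intend.

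The gap is in the step you defer as ``fiddly bookkeeping'': it is not true that the linearized polynomial you obtain has $q^2$-degree at most $n-d$. After reducing exponents modulo $x^{q^{2n}}-x$, the terms $(a_jx)^{q^{2n-2j+2}}$ contribute $q^2$-exponents $n-j+1\bmod n$, i.e.\ $0,n-1,n-2,\dots,(n+d+1)/2$, while the terms $(a_j)^qx^{q^{2j}}$ contribute $1,\dots,(n-d+1)/2$; so $L$ generically has $q^2$-degree $n-1$, and the naive root count only gives $\dim\ker L\le n-1$, which proves nothing. The missing idea is that these $n-d+1$ exponents form a \emph{cyclic} interval, so one can precompose with the $\F_{q^2}$-linear bijection $x\mapsto x^{q^{n-d-1}}$ (note $n-d-1$ is even, by the parity hypothesis) to shift them into $\{0,2,\dots,2(n-d)\}$: the polynomial $L(x^{q^{n-d-1}})$ has degree at most $q^{2(n-d)}$ and the same kernel dimension as $L$, whence $\dim_{\F_{q^2}}\rad(H)\le n-d$. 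Without this Frobenius-shift observation (or an equivalent one) the degree bound you assert does not hold, and the argument as written would fail at its central point.
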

\begin{proof}
It is readily verified that the mappings $H$ are Hermitian and that the linearity of the trace function implies that the set under consideration is additive. It is therefore enough to show that $H$ has rank at least $d$ unless $a_1=\dots=a_{(n-d+1)/2}=0$.
\par
We may write
\[
H(x,y)=\Tr(y^q\,L(x)),
\]
where $L$ is an endomorphism of $\F_{q^{2n}}$, given by
\[
L(x)=\sum_{j=1}^{(n-d+1)/2}\big((a_jx)^{q^{2n-2j+2}}+(a_j)^q\,x^{q^{2j}}\big).
\]
We have
\[
L(x^{q^{n-d-1}})=\sum_{j=1}^{(n-d+1)/2}\big((a_j)^{q^{2n-2j+2}}x^{q^{n-d-2j+1}}+(a_j)^q\,x^{q^{n-d+2j-1}}\big).
\]
If not all of $a_j$'s are zero, then this is a polynomial of degree at most $q^{2(n-d)}$ and so has at most $q^{2(n-d)}$ zeros. Now notice that
\begin{align*}
\F_{q^{2n}}\times \F_{q^{2n}}&\to \F_{q^2}\\
(u,v)&\mapsto \Tr(uv)
\end{align*}
is a nondegenerate bilinear form. Therefore, since the kernel of a nonzero $L$ on $\F_{q^{2n}}$ has dimension at most $n-d$ over $\F_{q^2}$, the radical of the corresponding Hermitian form also has dimension at most $n-d$ over $\F_{q^2}$. Therefore, $H$ has rank at least $d$ unless $a_1=\dots=a_{(n-d+1)/2}=0$, as required.
\end{proof}
\par
The following theorem contains a construction for $d$-codes in $X(n,q)$ when $n$ and $d$ are both odd.
\begin{theorem}
\label{thm:construction_2}
Let $n$ and $d$ be odd integers satisfying $1\le d\le n$. Then, as $a_0$ ranges over $\F_{q^n}$ and $a_1,\dots,a_{(n-d)/2}$ range over $\F_{q^{2n}}$, the mappings
\begin{gather*}
H:\F_{q^{2n}}\times \F_{q^{2n}}\to\F_{q^2}\\[1ex]
H(x,y)=\Tr\bigg(a_0xy^{q^n}+\sum_{j=1}^{(n-d)/2}(a_jxy^{q^{n-2j}}+(a_j)^qx^{q^{n-2j+1}}y^q)\bigg)
\end{gather*}
form an additive $d$-code in $X(n,q)$ of size $q^{n(n-d+1)}$.
\end{theorem}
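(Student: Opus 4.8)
The plan is to follow the proof of Theorem~\ref{thm:construction_1} closely; the only new feature is the extra summand $\Tr(a_0xy^{q^n})$, for which $a_0$ is restricted to $\F_{q^n}$. First I would verify that every $H$ is a Hermitian form. The summands indexed by $j\ge 1$ occur in conjugate pairs exactly as in Theorem~\ref{thm:construction_1} (with $n-2j$ playing the role of $2j-1$), and for the new summand one computes
\[
\overline{\Tr(a_0xy^{q^n})}=\Tr(a_0^qx^qy^{q^{n+1}})=\Tr(a_0^{q^n}x^{q^n}y^{q^{2n}})=\Tr(a_0yx^{q^n}),
\]
using $\Tr(z)^q=\Tr(z^q)$, $\Tr(z^{q^2})=\Tr(z)$, the fact that $n-1$ is even, and that $a_0\in\F_{q^n}$ forces $a_0^{q^n}=a_0$; here the oddness of $n$ is essential. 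Linearity of $H$ in its first coordinate holds because every exponent of $x$ occurring in the defining formula is a power of $q^2$ (indeed $n-2j+1$ is even since $n$ is odd), and additivity of the set of forms follows from linearity of $\Tr$.

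Next I would write $H(x,y)=\Tr(y^qL(x))$ for an $\F_{q^2}$-linear map $L$ on $\F_{q^{2n}}$. Raising each summand to a suitable power of $q^2$ so that $\Tr$ is unchanged and the exponent of $y$ becomes $q$, one obtains
\[
L(x)=a_0^qx^{q^{n+1}}+\sum_{j=1}^{(n-d)/2}\bigl(a_j^{q^{n+2j+1}}x^{q^{n+2j+1}}+a_j^qx^{q^{n-2j+1}}\bigr).
\]
The exponents of $x$ appearing here are precisely the even integers from $d+1$ to $2n+1-d$, so they are pairwise distinct modulo $2n$ (only when $d=1$ does the top exponent $2n$ reduce, harmlessly, to $0$); hence $L=0$ if and only if $a_0=a_1=\dots=a_{(n-d)/2}=0$. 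Since $(u,v)\mapsto\Tr(uv)$ is a nondegenerate bilinear form on $\F_{q^{2n}}$ and $y\mapsto y^q$ is a bijection, the radical of $H$ coincides with the kernel of $L$, an $\F_{q^2}$-subspace of $\F_{q^{2n}}$, so $H$ has rank $n-\dim_{\F_{q^2}}\ker L$.

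To bound $\ker L$, I would replace $x$ by $x^{q^{2n-d-1}}$; since this is a bijection of $\F_{q^{2n}}$ the number of zeros is unchanged, and the exponents of $x$ in $L(x^{q^{2n-d-1}})$ become, modulo $2n$, the even integers from $0$ to $2(n-d)$. Thus $L(x^{q^{2n-d-1}})$ is a polynomial of degree at most $q^{2(n-d)}$, which is nonzero whenever $L\ne 0$, so $L$ has at most $q^{2(n-d)}$ zeros and $\dim_{\F_{q^2}}\ker L\le n-d$. Consequently $H$ has rank at least $d$ unless all of $a_0,\dots,a_{(n-d)/2}$ vanish. Because the set of forms is additive and the difference of two distinct members is a nonzero member, this proves that we obtain a $d$-code; applied to the zero form it also shows that the parametrisation is injective, so the code has size $q^n\cdot(q^{2n})^{(n-d)/2}=q^{n(n-d+1)}$. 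The one delicate point is the bookkeeping that identifies the $x$-exponents of $L$ as an arithmetic progression of common difference $2$ and length $n-d+1$; once this is in place, the rest of the argument runs exactly as for Theorem~\ref{thm:construction_1}.
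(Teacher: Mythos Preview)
Your proposal is correct and follows exactly the approach of the paper, which only gives a one-paragraph sketch referring back to Theorem~\ref{thm:construction_1}; in fact you have filled in precisely the details the paper omits (the Hermitian verification for the $a_0$-term using $a_0^{q^n}=a_0$, the identification of the $x$-exponents of $L$ as the even integers $d+1,d+3,\dots,2n-d+1$, and the degree bound after the substitution $x\mapsto x^{q^{2n-d-1}}$). Your $L$ agrees with the paper's $(a_0x)^{q^{n+1}}+\sum_j\bigl((a_jx)^{q^{n+2j+1}}+a_j^q x^{q^{n-2j+1}}\bigr)$ once one notes that $a_0^{q^{n+1}}=a_0^q$ for $a_0\in\F_{q^n}$.
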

\begin{proof}
The proof is similar to that of Theorem~\ref{thm:construction_1}, and so only a sketch is included. We may write
\[
H(x,y)=\Tr(y^q\,L(x)),
\]
where $L$ is an endomorphism of $\F_{q^{2n}}$, given by
\[
L(x)=(a_0x)^{q^{n+1}}+\sum_{j=1}^{(n-d)/2}\big((a_jx)^{q^{n+2j+1}}+(a_j)^q\,x^{q^{n-2j+1}}\big).
\]
If not all of the $a_j$'s are zero, then $L(x^{q^{2n-d-1}})$ is induced by a polynomial of degree at most $q^{2(n-d)}$ and therefore, as in the proof of Theorem~\ref{thm:construction_1}, we find that $H$ hast rank at least $d$ unless $a_0=\dots=a_{(n-d)/2}=0$.
\end{proof}
\par
Theorems~\ref{thm:construction_1} and~\ref{thm:construction_2} give constructions of maximal additive $d$-codes in $X(n,q)$ for every possible $n$ and $d$ except when both $n$ and $d$ are even. Constructions of maximal additive $d$-codes in $X(n,q)$ are easy to obtain for $d=2$ and for $d=n$, independently of whether $n$ is even or odd. For $d=n$, we can take an $\F_q$-vector space of $q^n$ symmetric matrices of size~$n\times n$ over~$\F_q$ with the property that every nonzero matrix in this space is nonsingular. Constructions of such sets are well known (see~\cite{GabPil2006} or~\cite{Sch2015}, for example). Another construction of maximal additive $n$-codes in $X(n,q)$ was given in~\cite{DumGowShe2011}. For $d=2$, we can take all matrices in $X(n,q)$ whose main diagonal contains only zeros~\cite[Theorem~6.1]{Sch2016}. However, it is currently an open problem how to construct (if they exist) maximal additive $d$-codes in $X(n,q)$ when~$n$ and~$d$ are even integers satisfying $4\le d\le n-2$.
\par
We close this section by showing that the bound for additive codes in Theorem~\ref{thm:bound} can be surpassed by non-additive codes whenever $n$ is even and $d=n$. This follows already from~\cite[Theorem~9]{GowLavSheVan2014}. Here we give a more direct construction. The main ingredient is a set $Z$ of $m\times m$ matrices over~$\F_q$ with the property that $\abs{Z}=q^m$ and $A-B$ is nonsingular for all distinct $A,B\in Z$. Such objects are equivalent to finite quasifields~\cite{delKieWasWil2016} and several constructions are known (see~\cite{Del1978} for a canonical construction corresponding to finite fields).
\begin{theorem}
\label{thm:con_non_additive}
Let $n$ be an even positive integer and let $Z$ be a set of $q^n$ matrices over $\F_{q^2}$ of size $n/2\times n/2$ with the property that $A-B$ is nonsingular for all distinct $A,B\in Z$. Let
\[
Y=\left\{
\begin{pmatrix}
I & A^*\\
A & AA^*
\end{pmatrix}
:A\in Z
\right\}
\cup
\left\{
\begin{pmatrix}
O & O\\
O & I
\end{pmatrix}
\right\},
\]
where $O$ and $I$ are the zero and identity matrices of size $n/2\times n/2$, respectively. Then $Y$ is an $n$-code in $X(n,q)$ of size $q^n+1$.
\end{theorem}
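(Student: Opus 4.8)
The plan is to verify the two defining properties of an $n$-code: that every matrix in $Y$ is Hermitian and of the correct size, and that the difference of any two distinct members of $Y$ has rank $n$, i.e.\ is nonsingular. Membership in $X(n,q)$ is immediate: the block matrix $\left(\begin{smallmatrix} I & A^* \\ A & AA^* \end{smallmatrix}\right)$ is Hermitian because its $(1,1)$ and $(2,2)$ blocks $I$ and $AA^*$ are Hermitian and its off-diagonal blocks are conjugate-transposes of one another; the second matrix $\left(\begin{smallmatrix} O & O \\ O & I \end{smallmatrix}\right)$ is obviously Hermitian. Since $\abs{Z}=q^n$ and the two ``families'' in the union are disjoint (the $(1,1)$ block is $I$ in the first family, $O$ in the singleton), $\abs{Y}=q^n+1$, as claimed.

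For the rank condition there are two cases. First, take two distinct matrices coming from the first family, associated with distinct $A,B\in Z$; their difference is
\[
\begin{pmatrix} O & A^*-B^* \\ A-B & AA^*-BB^* \end{pmatrix}.
\]
Writing $C=A-B$, which is nonsingular by hypothesis, I would perform block row/column operations (left- and right-multiplication by unimodular block matrices, which preserve rank) to clear the $(2,2)$ block against the nonsingular off-diagonal blocks $C$ and $C^*$; this reduces the matrix to an anti-diagonal block form $\left(\begin{smallmatrix} O & C^* \\ C & O \end{smallmatrix}\right)$ (up to sign/modification of blocks), whose rank is $2\cdot\operatorname{rank}(C)=n$. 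The key identity to check here is that the Schur-complement-type expression one gets when eliminating the $(2,2)$ entry vanishes, which should follow from $AA^*-BB^*$ being expressible through $A^*-B^*$ and $A-B$ after the row operation; alternatively, one computes a determinant of the $2\times 2$ block matrix directly using the nonsingularity of the off-diagonal blocks. Second, take the singleton matrix against a member of the first family; the difference is
\[
\begin{pmatrix} I & A^* \\ A & AA^*-I \end{pmatrix},
\]
and here I would again use block elimination: subtract $A$ times the first block row from the second to turn the $(2,1)$ block into $O$ and the $(2,2)$ block into $AA^*-I-AA^*=-I$, leaving $\left(\begin{smallmatrix} I & A^* \\ O & -I \end{smallmatrix}\right)$, which has rank $n$.

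The main obstacle is the first case — verifying that the difference of two matrices of the form $\left(\begin{smallmatrix} I & A^* \\ A & AA^* \end{smallmatrix}\right)$ is nonsingular. The subtlety is that the $(1,1)$ block of the difference is the \emph{zero} matrix, so one cannot directly take a Schur complement with respect to that block; instead one must pivot on an off-diagonal block, and one must be careful that the arising cross-terms built from $AA^*-BB^*$ genuinely cancel. The clean way is probably to note that $\left(\begin{smallmatrix} I & A^* \\ A & AA^* \end{smallmatrix}\right) = \left(\begin{smallmatrix} I \\ A \end{smallmatrix}\right)\left(\begin{smallmatrix} I & A^* \end{smallmatrix}\right)$, a rank-$n/2$ matrix, and more usefully that $\left(\begin{smallmatrix} I & A^* \\ A & AA^* \end{smallmatrix}\right) = P \left(\begin{smallmatrix} I & O \\ O & O \end{smallmatrix}\right) P^*$ where $P=\left(\begin{smallmatrix} I & O \\ A & I \end{smallmatrix}\right)$; similarly $\left(\begin{smallmatrix} O & O \\ O & I \end{smallmatrix}\right) = P\left(\begin{smallmatrix} O & O \\ O & I \end{smallmatrix}\right)P^*$ for any such $P$. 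Then the difference of two members is congruent (via such a $P$) to $\left(\begin{smallmatrix} I & O \\ O & O \end{smallmatrix}\right) - P'\left(\begin{smallmatrix} \bullet & \bullet \\ \bullet & \bullet \end{smallmatrix}\right)P'^*$ for an appropriate $P'=P_A^{-1}P_B$, and after computing $P_A^{-1}P_B = \left(\begin{smallmatrix} I & O \\ B-A & I \end{smallmatrix}\right)$ one reduces to showing nonsingularity of an explicit $2\times 2$ block matrix in $C=B-A$ alone. Since $C$ is nonsingular, that nonsingularity can be read off from a direct $2\times 2$ block determinant computation. This congruence bookkeeping is the part I would write most carefully.
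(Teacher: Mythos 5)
Your proposal is correct and follows essentially the same route as the paper: the difference of two first-family members is $\left(\begin{smallmatrix} O & C^* \\ C & \;\ast\; \end{smallmatrix}\right)$ with $C=A-B$ nonsingular, which is nonsingular regardless of the $(2,2)$ block (pivot on $C$ to clear it --- no cancellation involving $AA^*-BB^*$ is needed, so the ``key identity'' you worry about is vacuous), and your elimination $\left(\begin{smallmatrix} I & O \\ -A & I \end{smallmatrix}\right)\left(\begin{smallmatrix} I & A^* \\ A & AA^*-I \end{smallmatrix}\right)=\left(\begin{smallmatrix} I & A^* \\ O & -I \end{smallmatrix}\right)$ for the second case is exactly the paper's. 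The congruence detour via $P_A$ is harmless but unnecessary.
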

\begin{proof}
By the assumed properties of $Z$, it is plain that
\[
\begin{pmatrix}
O & A^*-B^*\\
A-B & AA^*-BB^*
\end{pmatrix}
\]
is nonsingular for all distinct $A,B\in Z$. Moreover, for each $n/2\times n/2$ matrix~$A$ over $\F_{q^2}$, we have
\[
\begin{pmatrix}
I &  O\\
-A & I
\end{pmatrix}
\begin{pmatrix}
I & A^*\\
A & AA^*-I
\end{pmatrix}
=
\begin{pmatrix}
I & A^*\\
O & -I
\end{pmatrix},
\]
and the proof is completed.
\end{proof}

%%%%%%%%%%%%%%%%%%%%%%%%%%%%%%%%%%%%%%%%%%%%%%%%%%%%%%%%

\appendix

\section{Computation of the eigenvalues}

We now derive the explicit expressions~\eqref{eqn:explicit_ev} for the numbers $Q_k(i)$. We begin with the following lemma, which gives a recurrence formula for the eigenvalues. Write $Q^{(n)}_k(i)$ for $Q_k(i)$ and $X_i(n)$ for $X_i$ to indicate dependence on $n$.
\begin{lemma}
\label{lem:eigenvalues_recurrence}
For $1\le i,k\le n$, we have
\[
Q^{(n)}_k(i)=Q^{(n)}_k(i-1)+(-q)^{2n-i}\,Q^{(n-1)}_{k-1}(i-1).
\]
\end{lemma}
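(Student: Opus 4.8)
The plan is to exploit the character-sum definition of the eigenvalues, $Q^{(n)}_k(i)=\sum_{A\in X_k(n)}\langle A,B\rangle$ for a fixed $B\in X_i(n)$, and to compare the sum at rank $i$ with the sum at rank $i-1$ by changing the matrix $B$ only slightly. Since the action group $G$ acts transitively on each relation, I may choose $B$ to be any convenient Hermitian matrix of rank $i$. The natural choice is to take $B$ in block form, say $B=\begin{pmatrix} B' & 0\\ 0 & \lambda\end{pmatrix}$ where $B'\in X_{i-1}(n-1)$ and $\lambda\in\F_q^*$, and to take $B_0=\begin{pmatrix} B' & 0\\ 0 & 0\end{pmatrix}$, which has rank $i-1$ and differs from $B$ only in the bottom-right entry. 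Then $Q^{(n)}_k(i)-Q^{(n)}_k(i-1)=\sum_{A\in X_k(n)}\langle A,B\rangle-\sum_{A\in X_k(n)}\langle A,B_0\rangle$, and using $\langle A,B\rangle=\langle A,B_0\rangle\cdot\chi(\tr(A^*(B-B_0)))$ with $B-B_0$ supported on the last diagonal entry, the difference collapses to a sum over $X_k(n)$ of $\langle A,B_0\rangle(\chi(\lambda a_{nn})-1)$, where $a_{nn}$ is the bottom-right entry of $A$.

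First I would partition $X_k(n)$ according to the bottom-right entry $a_{nn}$ and, more importantly, according to whether the last row and column of $A$ are "generic" or forced. The summand $\chi(\lambda a_{nn})-1$ vanishes when $a_{nn}=0$, so only matrices with nonzero last diagonal entry survive; but that alone is not enough. The key algebraic step is a reduction: for a Hermitian matrix $A$ whose $(n,n)$ entry is a nonzero scalar $c$, one can perform a Hermitian congruence (conjugation by a suitable $T\in\GL_n(\F_{q^2})$ fixing the relevant structure) to bring $A$ into the form $\begin{pmatrix} A'' & 0\\ 0 & c\end{pmatrix}$ with $A''\in X(n-1,q)$ of rank $k-1$. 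One must track how many matrices $A\in X_k(n)$ reduce to a given $A''$ and given $c$, and — crucially — how $\langle A,B_0\rangle$ depends only on $A''$ and $B'$ after this reduction, i.e. $\tr(A^*B_0)=\tr((A'')^*B')$. Summing over the fibres, the contribution factors as (number of fibres / normalization) times $\sum_{A''\in X_{k-1}(n-1)}\langle A'',B'\rangle=Q^{(n-1)}_{k-1}(i-1)$, and one also sums $\sum_{c\in\F_q^*}(\chi(\lambda c)-1)$, which equals $-1$ by the standard character identity $\sum_{c\in\F_q}\chi(\lambda c)=0$. Reconciling all the $q$-power counting factors should produce exactly the claimed coefficient $(-q)^{2n-i}$.

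The main obstacle is the bookkeeping in this fibration argument: counting precisely how many rank-$k$ Hermitian $n\times n$ matrices over $\F_{q^2}$ with prescribed nonzero $(n,n)$-entry are congruent (via the subgroup stabilizing the splitting $\langle e_1,\dots,e_{n-1}\rangle\oplus\langle e_n\rangle$ in the right way) to a prescribed rank-$(k-1)$ matrix in the upper-left block, and checking that the character value is genuinely constant on each fibre. This requires a careful count of the number of ways to complete $A''$ to $A$ — essentially the size of an orbit of the "last row" under the relevant affine action — and I expect the factor $(-q)^{2n-i}$ to emerge as the ratio of the number of admissible completions at rank $i$ (for $B$) to that at rank $i-1$, combined with the $-1$ from the character sum over $c$ and the sign conventions built into the negative $q$-binomial notation. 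An alternative, cleaner route — which I would fall back on if the direct orbit count gets unwieldy — is to avoid character sums entirely and instead verify the recurrence purely formally from the closed form \eqref{eqn:explicit_ev}: substitute the expression for $Q^{(n)}_k(i)$, $Q^{(n)}_k(i-1)$, and $Q^{(n-1)}_{k-1}(i-1)$, and check the identity term by term using the Pascal-type relations ${m\brack \ell}={m-1\brack \ell}+(-q)^{m-\ell}{m-1\brack \ell-1}$ and ${m\brack \ell}=(-q)^\ell{m-1\brack \ell}+{m-1\brack \ell-1}$ for negative $q$-binomial coefficients, together with the shift $\binom{k-j}{2}=\binom{(k-1)-j}{2}+(k-1-j)$. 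This formal verification is routine but tedious; the character-theoretic proof is more illuminating and is, I expect, the one intended.
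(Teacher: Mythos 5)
Your architecture is the same as the paper's: choose rank-$i$ and rank-$(i-1)$ representatives differing in a single diagonal entry, observe that only those $A$ whose distinguished diagonal entry $c$ is nonzero contribute to the difference of the two character sums, and reduce each such $A$ by a congruence to a block-diagonal matrix $\mathrm{diag}(A'',c)$ with $A''$ of rank $k-1$. The gap is in the step you yourself flag as the main obstacle, and it is not mere bookkeeping. Writing $A$ with lower-right entry $c\ne 0$, off-diagonal column $w\in(\F_{q^2})^{n-1}$ and upper-left block $M$, the congruence gives $M=A''+c^{-1}ww^{*}$, so $\tr(A^{*}B_0)=\tr((A'')^{*}B')+\tr(c^{-1}ww^{*}B')$. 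Hence $\langle A,B_0\rangle$ is \emph{not} constant on the fibre over $(A'',c)$: the fibre contributes not its cardinality $q^{2(n-1)}$ but the character sum $\sum_{w}\chi(\tr(c^{-1}ww^{*}B'))$, which (taking $B'$ diagonal with $i-1$ ones) factors into $i-1$ copies of $\sum_{v\in\F_{q^2}}\psi(v\overline{v})=-q$, where $\psi(x)=\chi(c^{-1}x)$ is a nontrivial character of $(\F_q,+)$, and hence equals $q^{2(n-i)}(-q)^{i-1}$. This signed, $i$-dependent evaluation is exactly where the coefficient $(-q)^{2n-i}$ --- in particular its alternating sign --- comes from; a ratio of completion counts is independent of $i$ and cannot produce it. There is also an arithmetic slip: $\sum_{c\in\F_q^{*}}(\chi(\lambda c)-1)=-1-(q-1)=-q$, not $-1$. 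With the correct values everything closes up: $(-q)\cdot q^{2(n-i)}(-q)^{i-1}=(-q)^{2n-i}$, matching the claim.

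Your fallback route --- verifying the recurrence term by term from the closed form \eqref{eqn:explicit_ev} --- is circular in this context. The lemma is a statement about the character sums \eqref{eqn:eigenvalues}, and the purpose of the appendix is precisely to \emph{derive} \eqref{eqn:explicit_ev} from this recurrence together with the initial values $Q_0(i)=1$ and $Q_k(0)=\abs{X_k}$; so \eqref{eqn:explicit_ev} is not available as an input. Checking that the right-hand side of \eqref{eqn:explicit_ev} satisfies the same recurrence is a genuinely needed, but separate, step, which the paper carries out afterwards using a Pascal-type identity for negative $q$-binomial coefficients.
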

\begin{proof}
We have
\[
Q_k(i)=\sum_{A\in X_k(n)}\langle A,S\rangle,
\]
where $S$ is an arbitrary element of $X_i(n)$. Take $S\in X_i(n)$ to be the diagonal matrix with diagonal $(1,\dots,1,0,\dots,0)$ and let $S'\in X_{i-1}(n-1)$ be the diagonal matrix with diagonal $(1,\dots,1,0,\dots,0)$. For an $n\times n$ Hermitian matrix~$A$, we write
\begin{equation}
A=\begin{pmatrix}
a   & v^* \\
v   & B
\end{pmatrix},   \label{eqn:matrix_A}
\end{equation}
so that $a\in\F_q$, $v\in(\F_{q^2})^{n-1}$, and $B$ is Hermitian of size $(n-1)\times (n-1)$. Then
\begin{align}
Q^{(n)}_k(i-1)-Q^{(n)}_k(i)&=\sum_{A\in X_k(n)}(\langle B,S'\rangle-\langle A,S\rangle)   \nonumber\\
&=\sum_{A\in X_k(n)}(\langle B,S'\rangle(1-\chi(a)).   \label{eqn:Qki_diff}
\end{align}
If $a=0$, then the summand is zero. Thus, to evaluate the sum, we may assume that $A$ is such that $a\in\F_q^*$. For $A$ of the form~\eqref{eqn:matrix_A}, write
\[
L=\begin{pmatrix}
1        & 0\\
-a^{-1}v & I
\end{pmatrix}
\]
(where $I$ is the identity matrix of size $(n-1)\times(n-1)$). Then $L$ is nonsingular and
\[
LAL^*=\begin{pmatrix}
a & 0\\
0 & C
\end{pmatrix},\quad\text{where}\quad C=B-a^{-1}vv^*.
\]
If $A$ has rank $k$, then $C$ has rank $k-1$ since $a$ is nonzero. Hence we find from~\eqref{eqn:Qki_diff} that
\begin{align}
Q^{(n)}_k(i-1)-Q^{(n)}_k(i)&=\!\!\!\!\sum_{C\in X_{k-1}(n-1)}\sum_{v\in(\F_{q^2})^{n-1}}\sum_{a\in\F_q^*}\langle C+a^{-1}vv^*,S'\rangle(1-\chi(a))   \nonumber\\
&=\!\!\!\!\sum_{C\in X_{k-1}(n-1)}\!\!\!\!\!\langle C,S'\rangle\sum_{a\in\F_q^*}(1-\chi(a))\sum_{v\in(\F_{q^2})^{n-1}}\langle a^{-1}vv^*,S'\rangle,   \label{eqn:rec_Q_sums}
\end{align}
using the homomorphism property~\eqref{eqn:ip_homomorphism}. We have
\[
\sum_{C\in X_{k-1}(n-1)}\langle C,S'\rangle=Q^{(n-1)}_{k-1}(i-1)
\]
and
\begin{align*}
\sum_{v\in(\F_{q^2})^{n-1}}\langle a^{-1}vv^*,S'\rangle&=q^{2n-2i}\sum_{v_1,\dots,v_{i-1}\in\F_{q^2}}\chi(a^{-1}(v_1\overline{v_1}+\cdots+v_{i-1}\overline{v_{i-1}}))\\[1ex]
&=q^{2n-2i}(-q)^{i-1}
\end{align*}
since, for an arbitrary nontrivial character $\psi$ of $(\F_q,+)$, we have 
\[
\sum_{v\in\F_{q^2}}\psi(v\overline{v})=1+(q+1)\sum_{v\in \F_q^*}\psi(v)=-q.
\]
Moreover
\[
\sum_{a\in\F_q^*}(1-\chi(a))=q.
\]
Substitute everything into~\eqref{eqn:rec_Q_sums} to find that
\[
Q^{(n)}_k(i-1)-Q^{(n)}_k(i)=q^{2n-2i+1}(-q)^{i-1}Q^{(n-1)}_{k-1}(i-1),
\]
as required.
\end{proof}
\par
To obtain the explicit expression~\eqref{eqn:explicit_ev} for the eigenvalues, we use the recurrence of Lemma~\ref{lem:eigenvalues_recurrence} together with the initial values
\begin{align}
Q_0(i)&=1,   \nonumber\\[0.5ex]
Q_k(0)&=\abs{X_k},   \label{eqn:Qk0}
\end{align}
which follow directly from~\eqref{eqn:eigenvalues}. It is well known (see, for example,~\cite{CarHod1955} or~\cite{Sta1981} for odd $q$ and \cite{Mer2004} for the general case) that 
\begin{equation}
\abs{X_k}=(-1)^k{n\brack k}\prod_{j=0}^{k-1}((-q)^n+(-q)^j).   \label{eqn:num_her_mat}
\end{equation}
We first verify that~\eqref{eqn:explicit_ev} gives the correct expressions for $Q_0(i)$ and $Q_k(0)$. The expression for $Q_0(i)$ holds trivially. Apply the following version of the $q$-binomial theorem
\[
\sum_{j=0}^h(-q)^{h-j\choose 2}{h\brack j}x^j\,y^{h-j}=\prod_{j=0}^{h-1}(x+(-q)^jy)\quad\text{for real $x,y$},
\]
to~\eqref{eqn:num_her_mat} to find that
\[
\abs{X_k}=(-1)^k{n\brack k}\sum_{j=0}^k(-q)^{k-j\choose 2}{k\brack j}(-q)^{nj}.
\]
Using the identity
\[
{n\brack k}{k\brack j}={n-j\brack n-k}{n\brack j},
\]
we see that~\eqref{eqn:explicit_ev} gives the correct expression for $Q_k(0)$. Now invoke Lemma~\ref{lem:eigenvalues_recurrence} and the following version of Pascal's triangle identity
\[
{n-i+1\brack j}-(-q)^{n-i-j+1}{n-i\brack j-1}={n-i\brack j}
\]
to conclude that~\eqref{eqn:explicit_ev} gives the correct expression for $Q_k(i)$ for all $k,i\ge 0$.

%%%%%%%%%%%%%%%%%%%%%%%%%%%%%%%%%%%%%%%%%%%%%%%%%%%%%%%%%%%%%%%%%%%%%%%%%%%%%%

% \bibliographystyle{amsplain}
% \bibliography{references}

\providecommand{\bysame}{\leavevmode\hbox to3em{\hrulefill}\thinspace}
\providecommand{\MR}{\relax\ifhmode\unskip\space\fi MR }
% \MRhref is called by the amsart/book/proc definition of \MR.
\providecommand{\MRhref}[2]{%
  \href{http://www.ams.org/mathscinet-getitem?mr=#1}{#2}
}
\providecommand{\href}[2]{#2}

\end{document}